\theoremstyle{plain}
\newtheorem*{theorem*}{Theorem}
\newtheorem{theorem}{Theorem}[section]
\newtheorem{lemma}[theorem]{Lemma}
\newtheorem{claim}[theorem]{Claim}
\newtheorem{proposition}[theorem]{Proposition}
\newtheorem*{claim*}{Claim}
\theoremstyle{remark}
\newcommand{\I}{\mathcal{I}}
\newcommand{\A}{\mathcal{A}}
\newcommand{\B}{\mathcal{B}}
\newcommand{\Prob}{\mathbb{P}}
\newcommand{\PN}{\mathcal{P}_n}
\newcommand{\N}{\mathbb{N}}
\newcommand\abs[1]{\left|#1\right|}
\let\emptyset\varnothing
\let\eps\varepsilon
\let\originalleft\left
\let\originalright\right
\renewcommand{\left}{\mathopen{}\mathclose\bgroup\originalleft}
\renewcommand{\right}{\aftergroup\egroup\originalright}
\begin{document}

\title{On regular 3-wise intersecting families}

\author{Keith Frankston}
\address{Department of Mathematics,	Rutgers University, Piscataway NJ 08854, USA}
\email{keith.frankston@math.rutgers.edu}

\author{Jeff Kahn}
\address{Department of Mathematics,	Rutgers University, Piscataway NJ 08854, USA}
\thanks{J.K. was supported by NSF Grant DMS1501962.}

\email{jkahn@math.rutgers.edu}

\author{Bhargav Narayanan}
\address{Department of Mathematics,	Rutgers University, Piscataway NJ 08854, USA}
\email{narayanan@math.rutgers.edu}

\date{22 November 2017}

\subjclass[2010]{Primary 05D05; Secondary 05E18}

\begin{abstract}
Ellis and the third author showed, verifying a conjecture of Frankl, that any $3$-wise intersecting family of subsets of $\{1,2,\dots,n\}$ admitting a transitive automorphism group has cardinality $o(2^n)$, while a construction of Frankl demonstrates that the same conclusion need not hold under the weaker constraint of being regular. Answering a question of Cameron, Frankl and Kantor from 1989, we show that the restriction of admitting a transitive automorphism group may be relaxed significantly: we prove that any $3$-wise intersecting family of subsets of $\{1,2,\dots,n\}$ that is regular and increasing has cardinality $o(2^n)$.
\end{abstract}
\maketitle

\section{Introduction}
This paper is primarily concerned with intersecting families: for an integer $r \geq 2$, a family of sets $\A$ is said to be \emph{$r$-wise intersecting} if any $r$ of the sets in $\A$ have nonempty intersection. There is by now a large body of work studying the extremal properties of families of sets under various intersection requirements; we refer the reader to the surveys~\citep{survey-1, survey-2} for an overview. A common theme that arises when studying the extremal properties of intersecting families is that the extremal constructions are often highly asymmetric; indeed, this is the case with many of the classical results in the field, such as the Erd\H{o}s--Ko--Rado theorem~\citep{EKR} and the Ahlswede--Khachatrian theorem~\citep{AK} to name just two. It is therefore natural to ask what, if anything, changes when one considers intersecting families subject to requirements of `symmetry', and this is the line of questioning that we pursue here.

For a positive integer $n \in \N$, let us write $[n]$ for the set $\{1, 2,\dots, n\}$, and $\PN$ for the power-set of $[n]$. We say that a family $\A \subseteq \PN$ is \emph{symmetric} if the automorphism group of $\A$ is transitive on $[n]$, \emph{regular} if every element of $[n]$ belongs to the same number of sets in $\A$, and \emph{increasing} if $\A$ is closed under taking supersets. We stress that the families we shall study here will be non-uniform, i.e., their members need not all be of the same size; for related work on uniform intersecting families, see the paper of Ellis, Kalai and the third author~\citep{elkal} addressing the symmetric case, and the results of Ihringer and Kupavskii~\citep{kup} addressing the regular case.

The family $\{x \subseteq [n]: |x| > n/2\}$ is a symmetric $2$-wise intersecting family containing a positive fraction of all the sets in $\PN$. Ellis and the third author~\citep{ellis}, verifying a conjecture of Frankl~\citep{frankl-1}, proved that symmetric $r$-wise intersecting families must be significantly smaller when $r \ge 3$; more precisely, they showed the following.

\begin{theorem}
\label{thm:sym}
If $\A \subseteq \PN$ is a symmetric $3$-wise intersecting family, then $|\A| = o(2^n)$.
\end{theorem}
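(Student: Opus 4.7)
My plan is to prove the theorem by pairing a simple probabilistic consequence of the $3$-wise intersecting condition at small $p$-biased scale with the Friedgut--Kalai sharp threshold theorem for monotone families invariant under a transitive permutation group. First I would replace $\A$ by its upward closure $\A^{\uparrow} = \{S \subseteq [n] : A \subseteq S \text{ for some } A \in \A\}$, which remains $3$-wise intersecting, still admits a transitive automorphism group, and has cardinality at least $|\A|$; this lets me assume henceforth that $\A$ is monotone increasing.

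The key inequality at small $p$ is as follows. For any $p \in [0,1]$, if $A,B,C$ are drawn independently from the $p$-biased product measure $\mu_p$ on $\PN$, then each coordinate $i$ fails to lie in $A \cap B \cap C$ independently with probability $1-p^3$, so $\Prob[A \cap B \cap C = \emptyset] = (1-p^3)^n$. Since any three sets in $\A$ have nonempty intersection,
\[
\mu_p(\A)^3 \;=\; \Prob[A,B,C \in \A] \;\leq\; 1 - (1-p^3)^n \;\leq\; np^3,
\]
and the choice $p_0 = (\eps/2)\, n^{-1/3}$ yields $\mu_{p_0}(\A) \leq \eps/2$ at the vanishing scale $p_0 = o(1)$. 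Suppose now, for contradiction, that $|\A| \geq \eps 2^n$ holds for arbitrarily large $n$, so that $\mu_{1/2}(\A) \geq \eps$. Because $\A$ is monotone and invariant under a transitive permutation group of $[n]$, the Friedgut--Kalai sharp threshold theorem provides a constant $C = C(\eps)$ such that the window $\{p : \eps/2 \leq \mu_p(\A) \leq 1 - \eps/2\}$ has length at most $C/\log n$. But the two bounds above place the entire interval $[p_0, 1/2]$ inside this window, and its length tends to $1/2$ as $n \to \infty$, contradicting the $O(1/\log n)$ bound once $n$ is sufficiently large. Hence $|\A|/2^n \to 0$ as required.

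The main obstacle is the sharp threshold step, where one relies on transitivity alone to pin the threshold window to width $O(1/\log n)$, as supplied by Friedgut--Kalai; this is exactly where the symmetry hypothesis enters in a nontrivial way. The rest of the argument is a clean two-scale interpolation, matching the small-$p$ bound extracted from the $3$-wise intersecting hypothesis against the constant-order lower bound at $p = 1/2$.
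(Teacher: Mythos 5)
Your reduction to a monotone family and the small-$p$ estimate are both fine: indeed, after replacing $\A$ by its up-closure and drawing $A,B,C$ independently from $\mu_{p}$, the inequality $\mu_p(\A)^3 \le 1-(1-p^3)^n \le np^3$ is correct, and it does force $\mu_{p_0}(\A)\le\eps/2$ at $p_0=\Theta(n^{-1/3})$. However, the contradiction you then try to extract from Friedgut--Kalai is not there. You assert that the two bounds $\mu_{p_0}(\A)\le\eps/2$ and $\mu_{1/2}(\A)\ge\eps$ place the interval $[p_0,1/2]$ inside the threshold window $\{p:\eps/2\le\mu_p(\A)\le 1-\eps/2\}$, but neither bound puts its endpoint in that window: $\mu_{p_0}(\A)\le\eps/2$ is consistent with $\mu_{p_0}(\A)=0$, and $\mu_{1/2}(\A)\ge\eps$ is perfectly consistent with $\mu_{1/2}(\A)=1$. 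In fact the two estimates are entirely compatible with a genuine sharp threshold occurring at $p=1/2$ (with $\mu_p(\A)$ near $0$ for $p<1/2-O(1/\log n)$ and near $1$ for $p>1/2+O(1/\log n)$), and this is exactly what happens for, say, $\A=\{x:|x|>2n/3\}$. So no contradiction with the Friedgut--Kalai $O(1/\log n)$ window is obtained, and the proof does not close.

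What is missing is an \emph{upper} bound on $\mu_p(\A)$ at some $p$ bounded away from $1/2$ from above, so that the sharp threshold, which forces $\mu_{1/2+\Omega(1)}(\A)$ to be close to $1$ whenever $\mu_{1/2}(\A)\ge\delta$, can bite. The route used in this paper (and in the Ellis--Narayanan argument it follows) gets such a bound from the $3$-wise condition indirectly: form the family $\I(\A)=\{x\cap y:x,y\in\A\}$ of pairwise intersections, observe that $\mu_{1/4}(\I(\A))\ge\mu_{1/2}(\A)^2\ge\delta^2$ (Proposition~\ref{prop:int}), note that $3$-wise intersection makes $\A$ and $\I(\A)$ cross-intersecting, and deduce via Proposition~\ref{prop:cross} that $\mu_{3/4}(\A)\le 1-\delta^2$. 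That upper bound at $p=3/4$ is then incompatible with the sharp-threshold lower bound $\mu_{3/4}(\A)\ge 1-\eps$ once $\eps<\delta^2$. Your single small-$p$ inequality cannot play this role, because it only controls $\mu_p(\A)$ where $p=o(1)$, a regime where the Friedgut--Kalai window gives no leverage.
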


On the other hand, a projective-geometric construction of Frankl~\citep{frankl-1} shows that there exist regular $3$-wise intersecting subfamilies of $\PN$ containing a positive fraction of all the sets in $\PN$, so the conclusion of Theorem~\ref{thm:sym} no longer holds when one considers regular families instead of symmetric ones. 

Here, we investigate the middle ground between symmetric and regular families following Cameron, Frankl and Kantor~\citep{frankl-2}: they proved that if $\A \subseteq \PN$ is a $4$-wise intersecting family that is both regular \emph{and increasing}, then $|\A| = o(2^n)$, and asked what one can say about regular $3$-wise intersecting families. Our main result answers this question by showing that the conclusion of Theorem~\ref{thm:sym} does hold for regular families, provided again that they are increasing.

\begin{theorem}
\label{thm:main}
If $\A \subseteq \PN$ is a $3$-wise intersecting family that is both regular and increasing, then $|\A| = o(2^n)$.
\end{theorem}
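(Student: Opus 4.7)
The plan is to argue by contradiction: suppose that for some constant $c > 0$ and infinitely many $n$, $|\A|/2^n \geq c$. The strategy is to track the $p$-biased measure $\mu_p(\A) = \sum_{A \in \A} p^{|A|}(1-p)^{n-|A|}$ as $p$ varies in $[0,1]$, and to play the 3-wise intersecting condition against the structural consequences of being regular and increasing.

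The easy direction uses 3-wise intersecting at small $p$: if $A_1, A_2, A_3$ are independent $p$-biased random subsets of $[n]$, then the events $\{A_i \in \A \text{ for } i = 1, 2, 3\}$ and $\{A_1 \cap A_2 \cap A_3 = \emptyset\}$ are disjoint, so
\[
\mu_p(\A)^3 + (1-p^3)^n \leq 1,
\]
forcing $\mu_p(\A) = o(1)$ whenever $p^3 n \to 0$. Combined with the hypothesis $\mu_{1/2}(\A) \geq c$ and the monotonicity of $p \mapsto \mu_p(\A)$ (since $\A$ is increasing), this means that $\mu_p(\A)$ must climb from $o(1)$ up to at least $c$ over a range of $p$ whose left endpoint is still $\omega(n^{-1/3})$.

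The hard direction is to use regularity together with monotonicity to rule this out. A first ingredient is the Kahn--Kalai--Linial inequality: since $\A$ is regular, all its individual influences $I_i(\A)$ at the uniform measure coincide, and KKL then lower bounds their common value by $\Omega(\log n / n)$, so the total influence at $p = 1/2$ is at least $\Omega(\log n)$; Margulis--Russo then gives control on $d\mu_p / dp$ near $1/2$. More globally, I would appeal to a sharp-threshold theorem in the style of Bourgain or Friedgut, whose hypotheses only require that the monotone function be far from any junta on few coordinates---a property enforced by regularity, since a junta on $k \ll n$ coordinates has vanishing influence on the remaining $n - k$ coordinates and hence, by regularity, small total influence, contradicting the $\Omega(\log n)$ lower bound. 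The expected outcome is that $\mu_p(\A)$ transitions sharply through some critical $p_c \in (0,1)$, with $\mu_p(\A) = o(1)$ for every $p \leq p_c - o(1)$; matching this against the easy direction then yields the contradiction.

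The main difficulty lies in this second step. The classical sharp-threshold theorems of Friedgut and Kalai presuppose a transitive automorphism group rather than mere regularity, and pushing them through in the weaker setting is delicate. One natural route is to invoke Bourgain's more flexible junta-approximation theorem, which carries no symmetry hypothesis, and to use regularity only to rule out the junta cases; another is to combine KKL at $p = 1/2$ with a monotonicity-based bootstrap that propagates lower bounds on $\mu_p$ down to smaller $p$ via integration of Russo's lemma. This is also the point of departure from the Cameron--Frankl--Kantor treatment of the 4-wise case: the weaker 3-wise hypothesis controls $\mu_p(\A)$ only for $p = o(n^{-1/3})$ rather than for $p = o(n^{-1/4})$, so the sharp-threshold machinery must be pushed over a considerably longer interval of values of $p$.
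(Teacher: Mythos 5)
Your plan correctly anticipates the role of Friedgut/Bourgain-style junta theorems, and the observation that regularity plus monotonicity makes all influences at $p=1/2$ equal (for increasing $\A$ one has $I_i = 2(2\mu(\A_i)-\mu(\A))$, so regularity forces equality), whence KKL gives $I_{1/2}(\A) = \Omega(\log n)$. The threshold-type lemma you are gesturing at (regular $+$ increasing $+$ $\mu_{1/2}\ge\delta$ implies $\mu_{1/2+\eps}\ge 1-\eps$) is indeed the engine of the paper's proof, and the paper extracts it from Friedgut's junta theorem together with an entropy/fibre argument that forces the approximating junta to be nearly trivial.

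However, the route you propose to the final contradiction has a genuine gap. You use the 3-wise hypothesis only through the bound $\mu_p(\A)^3 + (1-p^3)^n \le 1$, which gives $\mu_p(\A)=o(1)$ only when $p = o(n^{-1/3})$. That is entirely consistent with a sharp threshold near $p=1/2$: the function can sit at $o(1)$ for all $p$ up to about $1/2$ and then jump, which is exactly what a sharp-threshold theorem predicts. ``Matching this against the easy direction'' therefore produces no contradiction, and the KKL/Russo bootstrap does not help, since it can at most narrow the transition window, not move it leftward into the regime $p = o(n^{-1/3})$.

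What is missing is a second, sharper way to invoke the 3-wise condition, and this is where the paper departs from your sketch. Since $\A$ is 3-wise intersecting, the family $\I(\A)=\{x\cap y:x,y\in\A\}$ is \emph{cross-intersecting} with $\A$, which gives $\mu_{3/4}(\A)+\mu_{1/4}(\I(\A))\le 1$. A second-moment computation shows $\mu_{1/4}(\I(\A))\ge \mu_{1/2}(\A)^2\ge\delta^2$, and the threshold lemma gives $\mu_{3/4}(\A)>1-\delta^2$; adding these contradicts the cross-intersecting bound. Without the $\I(\A)$ device the threshold lemma by itself does not yield a contradiction, so you should replace the ``easy direction'' step with this cross-intersecting argument.
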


Of course, Theorem~\ref{thm:main} implies Theorem~\ref{thm:sym}; to see this, note that if $\A \subseteq \PN$ is a symmetric $3$-wise intersecting family, then $\{ y : x \subseteq y \text{ for some } x \in \A \}$ is a $3$-wise intersecting family containing $\A$ that is both regular and increasing. 

It is worth highlighting that in both~\citep{ellis} and the present work, Fourier analysis plays a crucial, if invisible, role: indeed, the proof of Theorem~\ref{thm:sym} hinges on a sharp threshold result of Friedgut and Kalai~\cite{fk}, while here, to prove the stronger assertion of Theorem~\ref{thm:main}, we in turn rely on the somewhat heavier machinery of Friedgut's junta theorem~\citep{junthm}. The main new technical tool that we develop to prove Theorem~\ref{thm:main} is a lemma demonstrating the existence of threshold-type behaviour under some rather mild conditions; this result (see Lemma~\ref{lem:st}) might be of some independent interest.

This paper is organised as follows. We collect the various tools we require in Section~\ref{sec:prelim}. The proof of Theorem~\ref{thm:main} follows in Section~\ref{sec:proof}. We conclude in Section~\ref{sec:conc} with a brief discussion of open problems.

\section{Preliminaries}\label{sec:prelim}
In this section, we briefly describe the notions and tools we shall require for our arguments.

For $0 \le p \le 1$, we write $\mu_p$ for the \emph{$p$-biased measure} on $\PN$, defined by
\[\mu_p(\{x\}) = p^{|x|}(1-p)^{n-|x|}\]
for all $x \subseteq [n]$. We abbreviate $\mu_{\frac{1}{2}}$ by $\mu$, and note that this is just the normalised counting measure.

For a family $\A \subseteq \PN$, we write $\I(\A) = \{x\cap y: x, y \in \A\}$ for the family of all possible intersections of pairs of sets from $\A$. We require the following proposition from~\citep{ellis}; we include a short proof for completeness.

\begin{proposition}
\label{prop:int}
For any $\A \subseteq \PN$, if $\mu_p(\A) \ge \delta$, then $\mu_{p^2}(\I(\A)) \ge \delta^2$.
\end{proposition}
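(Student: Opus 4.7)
The plan is to exploit the standard probabilistic coupling between $\mu_p$ and $\mu_{p^2}$: if $X$ and $Y$ are drawn independently from the $p$-biased measure on $\PN$, then each element $i \in [n]$ lies in $X \cap Y$ independently with probability $p^2$, so $X \cap Y$ is distributed according to $\mu_{p^2}$.

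With this coupling in hand, the proof is essentially a one-line observation. Whenever $X \in \A$ and $Y \in \A$, the set $X \cap Y$ belongs to $\I(\A)$ by the very definition of $\I(\A)$. Therefore
\[
\mu_{p^2}(\I(\A)) = \Prob[X \cap Y \in \I(\A)] \ge \Prob[X \in \A \text{ and } Y \in \A].
\]
Since $X$ and $Y$ are independent, the right-hand side factors as $\mu_p(\A)^2 \ge \delta^2$, giving the claim.

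There is no real obstacle here; the only thing to be slightly careful about is to write down the coupling correctly and to note that the inequality in the first display is indeed in the correct direction (it is an inequality rather than equality because a set in $\I(\A)$ can arise as $X \cap Y$ for many pairs $(X,Y)$, including pairs not both lying in $\A$). In the final write-up I would just state the coupling explicitly and then conclude in one line.
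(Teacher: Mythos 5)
Your proof is correct and follows exactly the same approach as the paper: draw $X, Y$ independently from $\mu_p$, observe that $X \cap Y \sim \mu_{p^2}$, and lower-bound $\Prob[X \cap Y \in \I(\A)]$ by $\Prob[X, Y \in \A] = \mu_p(\A)^2$. No differences to note.
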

\begin{proof}
Let $x$ and $y$ be two random elements of $\PN$ drawn independently according to the distribution $\mu_p$. It is then clear that $x \cap y$ has distribution $\mu_{p^2}$, so we have
\[\mu_{p^2}(\I(\A)) = \Prob(x \cap y \in \I(\A)) \ge \Prob(x,y \in \A) = \mu_p(\A)^2,\]
proving the proposition.
\end{proof}

We shall require the notions of influences and juntas. First, given $\A \subseteq \PN$, we say that an element $i \in [n]$ is \emph{pivotal} for $\A$ at $x \in \PN$ if exactly one of $x$ and $x \bigtriangleup \{i\}$ lies in $\A$, and for $0 \le p \le 1$, we define the \emph{total influence $I_p(\A)$} of $\A$ at $p$ to be the expected number of pivotal elements for $\A$ at a random set $x \in \PN$ drawn according to the distribution $\mu_{p}$. The following fundamental formula was originally observed independently by Margulis~\citep{margulis} and Russo~\citep{russo}.

\begin{proposition}
\label{prop:mr}
If $\A \subseteq \PN$ is increasing, then 
\[ \frac{d}{dp}\mu_p(\A) = I_p(\A)\]
for all $0 < p < 1$.\qed
\end{proposition}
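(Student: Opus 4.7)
The plan is to prove the identity by first introducing the multi-parameter analogue of $\mu_p$ and differentiating coordinate-wise, then recovering the stated formula via the chain rule. Specifically, for $\mathbf{p} = (p_1, \dots, p_n) \in (0,1)^n$, define the product measure $\mu_{\mathbf{p}}$ on $\PN$ by
\[ \mu_{\mathbf{p}}(\{x\}) = \prod_{i \in x} p_i \prod_{i \notin x} (1 - p_i), \]
so that $\mu_p$ is the special case where all $p_i$ equal $p$. The crucial structural observation is that, with all other coordinates held fixed, the quantity $\mu_{\mathbf{p}}(\A)$ is a linear (in fact affine) function of $p_i$ alone, since each atom contributes either a factor of $p_i$ or of $(1-p_i)$.

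Exploiting this linearity, I would group the sets in $\A$ according to whether they contain $i$ or not and directly compute
\[ \frac{\partial}{\partial p_i} \mu_{\mathbf{p}}(\A) = \Prob_{\mathbf{p}}(X \in \A \mid i \in X) - \Prob_{\mathbf{p}}(X \in \A \mid i \notin X), \]
where $X \sim \mu_{\mathbf{p}}$. Conditioning on the restriction $Y = X \setminus \{i\}$ (distributed as the product measure on $[n] \setminus \{i\}$) and rewriting, this difference equals $\Prob(Y \cup \{i\} \in \A) - \Prob(Y \in \A)$. Now I would invoke the hypothesis that $\A$ is increasing: since $Y \in \A$ forces $Y \cup \{i\} \in \A$, the difference collapses to the probability that $Y \cup \{i\} \in \A$ but $Y \notin \A$, which is precisely the probability that $i$ is pivotal for $\A$ at $X$.

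Finally, specializing $\mathbf{p} = (p, \dots, p)$ and applying the chain rule yields
\[ \frac{d}{dp}\mu_p(\A) = \sum_{i=1}^n \frac{\partial}{\partial p_i} \mu_{\mathbf{p}}(\A)\Big|_{\mathbf{p} = (p,\dots,p)} = \sum_{i=1}^n \Prob_p(i \text{ is pivotal for } \A \text{ at } X), \]
and the right-hand side is exactly $I_p(\A)$ by definition, since total influence is the expected number of pivotal coordinates. The only genuinely delicate step is the reduction using the increasing property; everything else is bookkeeping with the product measure. There is no real obstacle here, as the argument is a direct manipulation of the defining formula for $\mu_p$, and no machinery beyond linearity of expectation and the monotonicity hypothesis is required.
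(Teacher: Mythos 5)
Your proof is correct and complete; it is the standard derivation of the Margulis--Russo formula via multilinearity of $\mu_{\mathbf p}(\A)$ in the biases, coordinate-wise differentiation, and the chain rule, with the monotonicity hypothesis used precisely where needed to identify the signed difference $\Prob(Y\cup\{i\}\in\A)-\Prob(Y\in\A)$ with the pivotal probability. The paper itself offers no proof (it cites Margulis and Russo and marks the statement with a \qed), so there is no alternative argument to compare against; your write-up matches the classical one.
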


Next, for $J \subseteq [n]$, a family $\A \subseteq \PN$ is said to be a \emph{$J$-junta} if the membership of a set in $\A$ is determined by its intersection with $J$, or in other words, if $x \in \A$ and $x \cap J = y \cap J$ for some $y \in \PN$, then this implies that $y \in \A$. The following result due to Friedgut~\citep{junthm} will be our main tool.

\begin{theorem}
\label{prop:fk}
For each $C> 0$ and $0 <\eps< 1$, there exists $K>0$ such that the following holds for all $\eps \le p \le 1-\eps$ and $n \in \N$. For any $\A \subseteq \PN$ with $I_p(\A) \le C$, there exists a set $J\subseteq [n]$ with $|J| \le K$ and a $J$-junta $\B \subseteq \PN$ such that $\mu_p(\A \bigtriangleup \B) \le \eps$. \qed
\end{theorem}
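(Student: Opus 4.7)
The plan is to give the standard Fourier-analytic proof of Friedgut's junta theorem, working in $L^2(\PN, \mu_p)$ with the $p$-biased orthonormal character basis. Write $f = \mathbf{1}_\A$ and expand $f = \sum_{S \subseteq [n]} \hat f(S)\, \chi_S$; then one has the identities $I_p(\A) = (1/p(1-p)) \sum_S |S|\, \hat f(S)^2$ and $I_p^i(\A) = (1/p(1-p))\sum_{S \ni i} \hat f(S)^2$ (the exact normalization being irrelevant since $p$ is bounded away from $0$ and $1$), and individual influences sum to the total.

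The argument proceeds in three stages. First, choose the junta coordinates: for a threshold $\tau > 0$ to be fixed later in terms of $\eps$, $p$, and $C$, let $J = \{ i \in [n] : I_p^i(\A) > \tau\}$, so $|J| \le I_p(\A)/\tau \le C/\tau$; this will deliver the bound $|J| \le K$. Second, control the Fourier mass of $f$ coming from $S \not\subseteq J$, split into high and low levels. The high-level part satisfies $\sum_{|S| > d} \hat f(S)^2 \le I_p(\A)/d \le C/d$, which is small once $d$ is chosen large. For the low-level part $\sum_{S \not\subseteq J,\, |S|\le d} \hat f(S)^2$, invoke the Bonami--Beckner hypercontractive inequality for the $p$-biased cube, whose constants are bounded on $\eps \le p \le 1-\eps$, to obtain, for each $i \notin J$, an estimate of the shape
\[\sum_{S \ni i,\, |S|\le d} \hat f(S)^2 \;\le\; M(p)^{d}\, I_p^i(\A)^{1+\eta},\]
for some $\eta = \eta(p) > 0$. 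Summing over $i \notin J$ and using $I_p^i(\A) \le \tau$ together with $\sum_i I_p^i(\A) \le C$ bounds this contribution by $M(p)^d \cdot C \cdot \tau^{\eta}$, which can be made arbitrarily small by choosing $\tau$ small relative to $d$.

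Third, project and round. Let $g = \sum_{S \subseteq J} \hat f(S)\, \chi_S$, which depends only on the coordinates in $J$; the previous step gives $\|f - g\|_2^2 \le \eps^2/4$ once $d$ and then $\tau$ are chosen suitably. Finally take $\B = \{x : g(x) \ge 1/2\}$, which is a $J$-junta since $g$ is, and observe $|f - \mathbf{1}_\B| \le 2|f - g|$ pointwise, so Markov's inequality yields $\mu_p(\A \bigtriangleup \B) \le 4 \|f - g\|_2^2 \le \eps$.

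The main obstacle is the low-level estimate in the second stage. Without hypercontractivity, all one can say is $\sum_{S \ni i,\, |S|\le d}\hat f(S)^2 \le p(1-p)\, I_p^i(\A)$, and summing trivially only recovers the total influence bound $C$, which does not shrink with $\tau$. The crucial gain comes from upgrading $I_p^i(\A)$ to $I_p^i(\A)^{1+\eta}$ via hypercontractivity, so that each individual small influence $\le \tau$ contributes an extra factor of $\tau^{\eta}$, converting the a priori $O(1)$ bound into one that genuinely tends to zero with $\tau$. Ensuring that the $p$-biased hypercontractive constants remain uniformly bounded on $[\eps, 1-\eps]$ is precisely where the dependence of $K$ on both $\eps$ and $C$ is absorbed.
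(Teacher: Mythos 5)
This statement is Friedgut's junta theorem, which the paper imports from \citep{junthm} and states without proof, so there is no internal argument to compare against. Your outline is the standard Fourier-analytic proof of that theorem (essentially Friedgut's original one): high-degree truncation via the total-influence identity, taking $J$ to be the coordinates of influence above a threshold, the hypercontractive upgrade $I_p^i(\A) \mapsto I_p^i(\A)^{1+\eta}$ to kill the low-degree weight on sets meeting $[n]\setminus J$, and projection plus rounding with Markov; the one point needing care, the uniformity of the $p$-biased hypercontractive constants for $\eps \le p \le 1-\eps$ (and the biased-basis form of the derivative/influence identities), is exactly the point you flag, so the sketch is sound.
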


Finally, we say that two families $\A,\B \subseteq \PN$ are \emph{cross-intersecting} if $x \cap y \neq \emptyset$ for all $x\in \A$ and $y \in \B$. We need the following simple fact also used in~\citep{ellis}.
\begin{proposition}
\label{prop:cross}
If $\A,\B \subseteq \PN$ are cross-intersecting, then
\[\mu_p(\A) + \mu_{1-p}(\B) \le 1\]
for any $0 \le p \le 1$.
\end{proposition}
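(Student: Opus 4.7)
The plan is to exploit the involution $x \mapsto x^c := [n]\setminus x$, which interchanges the measures $\mu_p$ and $\mu_{1-p}$: indeed, if $x$ is drawn from $\mu_p$, then each element of $[n]$ belongs to $x^c$ independently with probability $1-p$, so $x^c$ has distribution $\mu_{1-p}$. Under this involution, the cross-intersecting hypothesis translates directly into a disjointness statement for the two families in question.

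To be concrete, I would argue as follows. Set $\B^c := \{y^c : y \in \B\}$. The cross-intersection condition says precisely that no $x \in \A$ can satisfy $x \cap y = \emptyset$ for any $y \in \B$, i.e., no $x \in \A$ can satisfy $x \subseteq y^c$ for any $y \in \B$; in particular, no $x \in \A$ is itself the complement of a set in $\B$, so $\A \cap \B^c = \emptyset$. Sampling $x$ according to $\mu_p$, the event $\{x \in \A\}$ and the event $\{x \in \B^c\}$ (equivalently, $\{x^c \in \B\}$) are disjoint, and by the distributional observation above the latter event has probability exactly $\mu_{1-p}(\B)$. Consequently,
\[ \mu_p(\A) + \mu_{1-p}(\B) = \mu_p(\A) + \mu_p(\B^c) = \mu_p(\A \cup \B^c) \le 1, \]
which is the desired inequality.

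There is essentially no obstacle here; the only thing to verify carefully is that complementation really does push $\mu_p$ forward to $\mu_{1-p}$, and that the cross-intersection condition is exactly disjointness of $\A$ from $\B^c$. Both are immediate from the definitions.
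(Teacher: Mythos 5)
Your proof is correct and is essentially the paper's argument: the paper also passes to the family of complements $\tilde{\B}=\{[n]\setminus y:y\in\B\}$, observes that cross-intersection forces $\A\cap\tilde{\B}=\emptyset$, and uses $\mu_p(\tilde{\B})=\mu_{1-p}(\B)$. Your disjointness phrasing and the paper's containment $\A\subseteq\PN\setminus\tilde{\B}$ are the same step in two dressings.
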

\begin{proof}
Since $\A$ and $\B$ are cross-intersecting, it is clear that $\A \subseteq \PN \setminus \tilde{\B}$, where $\tilde{\B} = \{[n] \setminus x : x \in \B\}$. Therefore,
\[\mu_p(\A) \le \mu_p(\PN \setminus \tilde{\B}) = 1 - \mu_{p}(\tilde{\B}) = 1-\mu_{1-p}(\B). \qedhere\]
\end{proof}

\section{Proof of the main result}\label{sec:proof}
Our proof of Theorem~\ref{thm:main} borrows ideas from both~\citep{frankl-2} and \citep{ellis}. Before turning to the proof, let us briefly explain what is lost, relative to the argument in~\citep{ellis}, by dropping the requirement of symmetry: for a family $\A \subseteq \PN$ that is both symmetric and increasing, a result of Talagrand~\citep{tal} guarantees that the total influence $I_p(\A)$ is large whenever $\mu_p(\A)$ is bounded away from both $0$ and $1$, which ensures, by Proposition~\ref{prop:mr}, that the derivative of $\mu_p(\A)$ with respect to $p$ is also large under these circumstances; this is no longer the case when one considers regular families as opposed to symmetric ones. A replacement for this fact, the main new ingredient here, is the following lemma asserting a somewhat weaker version of this threshold behaviour under milder conditions.

\begin{lemma}\label{lem:st}
For any $\eps, \delta>0$, the following holds for all sufficiently large $n \in \N$. If $\A \subseteq \PN$ is both regular and increasing, and $\mu (\A) \ge \delta$, then $\mu_{\frac{1}{2}+\eps} (\A) \ge 1-\eps$.
\end{lemma}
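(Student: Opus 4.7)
My plan is to argue by contradiction. Suppose that $\A$ is regular and increasing with $\mu(\A) \ge \delta$ yet $\mu_{1/2+\eps}(\A) < 1-\eps$. Since $\mu_p(\A)$ is nondecreasing in $p$ for increasing $\A$, we may also assume $\mu(\A) < 1-\eps$ (otherwise the conclusion is immediate). By Proposition~\ref{prop:mr},
\[
\int_{1/2}^{1/2+\eps} I_p(\A)\, dp \;=\; \mu_{1/2+\eps}(\A) - \mu(\A) \;<\; 1 - \eps - \delta,
\]
so by averaging there is some $p_0 \in [1/2, 1/2+\eps]$ with $I_{p_0}(\A) \le C$ for the constant $C = 2(1-\eps-\delta)/\eps$. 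I would then apply Theorem~\ref{prop:fk} at $p = p_0$ with total-influence bound $C$ and a sufficiently small parameter $\eps''>0$ (to be chosen in terms of $\eps$ and $\delta$); this yields a set $J \subseteq [n]$ with $|J| \le K = K(C, \eps'')$ and a $J$-junta $\B \subseteq \PN$ with $\mu_{p_0}(\A \triangle \B) \le \eps''$. I would further arrange (using the monotone closure, say) that $\B$ is increasing.

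The next step is to decompose $\A$ along its trace on $J$: for each $T \subseteq J$, set $\A_T = \{y \subseteq [n]\setminus J : y \cup T \in \A\}$, an increasing family on $[n]\setminus J$. The junta approximation at $p_0$, together with Markov's inequality applied to the $\mu_{p_0}^J$-weighted average of $\mu_{p_0}^{[n]\setminus J}(\A_T \triangle \B_T)$, says that apart from an $O(\sqrt{\eps''})$ $\mu_{p_0}^J$-measure of ``ambiguous'' $T$, each slice satisfies $\mu_{p_0}^{[n]\setminus J}(\A_T) \ge 1 - \sqrt{\eps''}$ (``heavy'' $T$, i.e.\ those defining $\B'$) or $\mu_{p_0}^{[n]\setminus J}(\A_T) \le \sqrt{\eps''}$ (``light'' $T$). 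Since each $\A_T$ is increasing and $p_0 \ge 1/2$, monotonicity in $p$ yields the one-sided transfer $\mu_{1/2}(\A_T) \le \mu_{p_0}(\A_T) \le \sqrt{\eps''}$ for light $T$, so the total contribution of light slices to $|\A|$ is $O(\sqrt{\eps''})\cdot 2^n$.

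I would now extract a contradiction from regularity. Regularity of $\A$ at $\mu = \mu_{1/2}$ gives, for every $j \in [n]$, the identity $|\{x \in \A : j \in x\}| = d$, which unpacks as
\[
\sum_{T \ni i} |\A_T| = d \quad (i \in J), \qquad \sum_{T \subseteq J} |\{y \in \A_T : j \notin y\}| = |\A| - d \quad (j \notin J).
\]
Combined with the heavy/light split, these equalities force a strong balance condition on the heavy slices: for each $j \notin J$ the count $\sum_{T \in \B'} |\{y \in \A_T : j \notin y\}|$ is pinned (up to $O(\sqrt{\eps''})\cdot 2^n$) to the same value $|\A|-d$, independently of $j$. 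Summing over $j \notin J$ and using that (by FKG) an increasing family $\A_T$ satisfies the avg-size bound coming from the regularity counting identity, a standard concentration argument of Chernoff/entropy type then shows that almost all sets in the heavy $\A_T$'s are concentrated at atypically large sizes, forcing each heavy $|\A_T|$ to be $o_n(2^{n-K})$; summing over the (at most $2^K$) heavy $T$ contradicts $\mu(\A) \ge \delta$ for all sufficiently large $n$.

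The main obstacle I anticipate is the mismatch between the two measures in play: Friedgut's junta approximation controls $\A$ at $\mu_{p_0}$, while regularity is a statement at $\mu = \mu_{1/2}$, and individual influences of $\A$ can differ substantially between these distributions, so one cannot simply ``transfer'' the junta approximation to $p = 1/2$. The plan above navigates around this by working directly with the combinatorial degree identities furnished by regularity at $p = 1/2$, and feeding into them only the one-sided bounds on $\mu_{1/2}(\A_T)$ that monotonicity of each increasing slice allows.
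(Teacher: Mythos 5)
Your opening moves match the paper's proof: using Margulis--Russo to locate a $p_0 \in [1/2, 1/2+\eps]$ with bounded total influence, applying Friedgut's junta theorem at $p_0$, decomposing $\A$ into slices $\A_T$ over $T \subseteq J$, and observing via Markov that each non-ambiguous slice is either ``heavy'' ($\mu_{p_0}(\A_T) \ge 1-\sqrt{\eps''}$, $T \in \B'$) or ``light'' ($\mu_{p_0}(\A_T) \le \sqrt{\eps''}$, $T \notin \B'$). But the final step is where the argument breaks down, and in a way that suggests the key idea is missing rather than merely under-explained. You are trying to show each \emph{heavy} slice has $\mu_{1/2}(\A_T) = o_n(2^{n-K})$, so that $\mu(\A)$ is small --- this is simply false. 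Take $\A = \{x : |x| > n/2\}$ (regular, increasing, $\mu(\A) \approx 1/2$): for any $p_0$ slightly above $1/2$ the junta is the all-ones junta ($J = \emptyset$, one heavy slice $T = \emptyset$ with $\A_\emptyset = \A$), and $\mu(\A_\emptyset) \approx 1/2 \neq o(1)$. No ``concentration argument of Chernoff/entropy type'' can force heavy slices to be small, because they needn't be; the lemma holds for majority precisely because $\mu_{1/2+\eps}(\mathrm{Maj}) \to 1$. So the proposed contradiction cannot be reached.

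What regularity actually buys --- and what your plan never extracts --- is a uniform \emph{lower} bound on every slice. From the degree identities you wrote down ($\sum_{T \ni i}|\A_T| = d$ for $i \in J$), one should combine with an entropy/subadditivity argument (as the paper does): subadditivity of entropy applied to a uniform draw from $\A$ forces $|\A_i|/|\A| = 1/2 + O(1/\sqrt n)$ for all $i$, and since $\A$ is increasing and $|J| = O(1)$, telescoping the nonnegative differences $\tau(\A_T) - \tau(\A_{T\setminus\{i\}})$ yields $\tau(\A_T) = \mu(\A) + o(1) \ge \delta/2$ for \emph{every} $T \subseteq J$. This is the step you are missing, and once you have it, the conclusion goes the other way from what you attempted: a light slice satisfies $\mu_{p_0}(\A_T) \le \sqrt{\eps''}$ but also $\mu_{p_0}(\A_T) \ge \mu_{1/2}(\A_T) \ge \delta/2$ (monotonicity, since $p_0 \ge 1/2$), which is impossible once $\eps''$ is small in terms of $\delta$. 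Hence the junta is almost the full cube, $\mu_{p_0}(\B)$ is close to $1$, and $\mu_{1/2+\eps}(\A) \ge \mu_{p_0}(\A) \ge \mu_{p_0}(\B) - \eps'' \ge 1-\eps$ --- no contradiction argument needed. In short: instead of pushing heavy slices down, regularity pushes \emph{all} slices up, which rules out light slices; that is the mechanism by which the lemma is proved.
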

\begin{proof}
In what follows, we fix $\eta = \eps\delta/(2+\delta)$ and additionally suppose that $n$ is large enough for all our estimates to hold; in particular, constants suppressed by the asymptotic notation may depend on $\eps$ and $\delta$ but, of course, not on $n$. 

Since $\mu(\A) = \mu_{\frac{1}{2}} (\A) \ge \delta$ and $\mu_{\frac{1}{2}+\eps} (\A) \le 1$, it follows from Proposition~\ref{prop:mr} that there exists $q \in [1/2, 1/2+\eps]$ such that $I_q(\A) \leq 1/\eps$. Theorem~\ref{prop:fk} now implies that there exists $J \subseteq [n]$ with $|J|= K$ and a $J$-junta $\B \subseteq \PN$ such that $\mu_q(\A\bigtriangleup\B) \le \eta$, where $K$ is a constant depending only on $\eps$ and $\delta$. 

Let us set up some notation before we proceed. For $i\in[n]$, let $\A_i$ denote the family of those sets in $\A$ containing $i$, and for $y \subseteq J$, define the \emph{fibre $\A(y)$} of $\A$ over $y$ by 
\[\A(y)=\lbrace x\setminus y : x\in\A \text{ and } x\cap J=y\rbrace.\] Also, let $\B'$ be the family on $J$ determining $\B$, i.e., $x \in \PN$ belongs to $\B$ if and only if $x \cap J$ belongs to $\B'$.

We first note that as $\A$ is regular, the sets $\A_i$ are all roughly half as large as $\A$; a similar observation is used in~\citep{frankl-2}.
\begin{claim}\label{claim:ent}
For each $i \in [n]$, we have $\mu (\A)/2 \le \mu(\A_i) \le \mu (\A)/2 +O(1/\sqrt{n})$.
\end{claim}
\begin{proof}
The first inequality follows from the fact that $\A$ is increasing, so it suffices to verify the second. Let $Z$ be a set drawn uniformly at random from $\A$, and for $i \in [n]$, let $Z_i$ be the indicator of the event $\{i \in Z\}$. We shall rely on the properties of the binary entropy $H(\cdot)$ of a random variable; see~\citep{book} for the basic notions. It follows from the sub-additivity of entropy that $H(Z) \le \sum_{i =1}^{n} H(Z_i)$. Clearly, we have \[H(Z) = \log_2 |\A| = n + \log_2 (\mu (\A))\ge n + \log_2 \delta,
\] and, writing $\vartheta$ for the common value of $|\A_i|/|\A|$ for all $i \in [n]$, we also have \[H(Z_i) = -\vartheta\log_2 \vartheta - (1 - \vartheta) \log_2 (1 - \vartheta)\] for each $i \in [n]$. It is now easy to verify from the sub-additivity estimate above that $\vartheta = 1/2 + O(1/\sqrt{n})$, proving the claim.
\end{proof}

Next, we observe that all the fibres of $\A$ have roughly the same size as well. Let us write $\sigma_p$ for the $p$-biased measure on the power set of $J$ and $\tau_p$ for the $p$-biased measure on the power set of $[n] \setminus J$, so that $\mu_p = \sigma_p \times \tau_p$, and again, we abbreviate $\sigma_{\frac12}$ and $\tau_{\frac12}$ by $\sigma$ and $\tau$ respectively.
\begin{claim}
\label{claim:fibre}
For all $y\subseteq J$, we have $\tau(\A(y)) = \mu (\A) + o(1)$.
\end{claim}
\begin{proof}
We note that
\[
\mu(\A) = \sum_{y \subseteq J} \sigma(y) \tau(\A(y)),
\]
and that $\sigma(y) = 2^{-K}$ for all $y \subseteq J$. For any $i \in y \subseteq J$, we have $\A(y\setminus \{ i \}) \subseteq \A(y)$ because $\A$ is increasing, so \[\tau(\A(y)) \ge \tau(\A(y\setminus \{ i \})).\]
Since $|J| = K = O(1)$, to prove the claim, it clearly suffices to show that for any $i \in y \subseteq J$, we have
\[\tau(\A(y)) \le \tau(\A(y\setminus \{ i \}))+ o(1);\]
indeed, this would imply that 
\[\tau(\A(y))=\tau(\A({\emptyset}))+ o(\abs{y})=\tau(\A({\emptyset}))+o(1)\]
for each $y \subseteq J$, and the claim would follow.

Fix $i \in J$, and note that
\[
\mu(\A_i) = \sum_{i\in y \subseteq J} \sigma(y) \tau(\A(y)),
\]
so we have
\[\mu(\A_i) - \mu(\A)/2 = 2^{-K-1} \sum_{i\in y \subseteq J} \left(\tau(\A(y)) - \tau(\A(y\setminus \{ i \}))\right).\]
We know from Claim~\ref{claim:ent} that $\mu(\A_i) - \mu(\A)/2 = O(1/\sqrt{n})$, so for each $y \subseteq J$ containing $i$, we have 
\[\tau(\A(y)) - \tau(\A(y\setminus \{ i \})) = O(1/\sqrt{n}),\]
as required.
\end{proof}

We may now complete the proof of the lemma. Recall that we earlier fixed $q \in [1/2, 1/2+\eps]$ and a $J$-junta $\B \subseteq \PN$ such that $\mu_q(\A\bigtriangleup\B) \le \eta$, and defined $\B'$ to be the family on $J$ determining $\B$. 

First, note that 
\[\mu_q(\A\bigtriangleup\B) = \sum_{y\in \B'} \sigma_q(y)\left(1-\tau_q(\A(y))\right) + \sum_{y \not\in \B'}\sigma_q(y)\left(\tau_q(\A(y))\right).\]
Since $\A$ is increasing, we see from Claim~\ref{claim:fibre} that $\tau_q(\A(y)) \ge \tau(\A(y)) \ge \delta/2$ for all $y \subseteq J$. Therefore, since $\mu_q(\A\bigtriangleup\B)  \le \eta$, we see that
\[ \sum_{y \not\in \B'}\sigma_q(y) \le 2\eta/\delta,\]
which implies that
\[ \mu_q(\B)  = \sum_{y \in \B'} \sigma_q(y) \ge 1 - 2\eta/\delta.\]
Again, since $\mu_q(\A\bigtriangleup\B)  \le \eta$ and $\eta=\eps\delta/(2+\delta)$, it follows that 
\[ \mu_{\frac{1}{2}+\eps}(\A) \ge \mu_q(\A) \ge 1 - 2\eta/\delta - \eta = 1 - \eps,
\]
proving the lemma.
\end{proof}

Armed with Lemma~\ref{lem:st}, we may now prove Theorem~\ref{thm:main}; the proof below by and large follows the argument in~\citep{ellis}, with Lemma~\ref{lem:st} serving as a substitute for the sharp threshold result used there.

\begin{proof}[Proof of Theorem~\ref{thm:main}]
We need to show for any fixed $\delta >0$, that for all but finitely many $n \in \N$, if $\A \subseteq \PN$ is a $3$-wise intersecting family that is both regular and increasing, then $\mu(\A) < \delta$; hence, suppose for a contradiction that $n$ is sufficiently large and that $\A \subseteq \PN$ is a family as just described with $\mu(\A) \ge \delta$. 

Let us fix $\eps = \min\{1/4, \delta^2/2\}$. First, since $\A$ is increasing, we know from Lemma~\ref{lem:st} that \[\mu_{\frac{3}{4}} (\A)\ge \mu_{\frac{1}{2}+\eps}(\A) \ge 1- \eps > 1-\delta^2.\]
Next, by Proposition~\ref{prop:int}, we have \[\mu_{\frac14} (\I(\A)) \geq \delta^2.\] 
Finally, since $\A$ is a $3$-wise intersecting family, $\A$ and $\I(\A)$ are cross-intersecting, so we conclude from Proposition~\ref{prop:cross} that 
\[\mu_{\frac34}(\A)\leq 1 - \mu_{\frac14}(\I(\A)) \leq 1- \delta^2,\]
yielding a contradiction, and establishing the result.
\end{proof}

\section{Conclusion}\label{sec:conc}
The best bound for Theorem~\ref{thm:main} that we may read out of the argument here is rather poor on account of our reliance on the junta theorem; it would therefore be interesting to improve this. Concretely, it would be good to decide if any $3$-wise intersecting family $\A \subseteq \PN$ that is both regular and increasing must satisfy 
\[\log_2 |\A| \le n - cn^{\delta},\] 
where $c,\delta>0$ are universal constants; as evidenced by the constructions in~\citep{ellis}, a bound of this type would be the best one could hope for. We  ought to point out that we do not yet know how to prove an estimate of the above form even for \emph{symmetric} $3$-wise intersecting families; what is known however is that such an estimate does hold for symmetric $4$-wise intersecting families, as was shown by Cameron, Frankl and Kantor~\citep{frankl-2}.

\bibliographystyle{amsplain}
\bibliography{regular_3_families}

\end{document}